\documentclass[a4paper,12pt]{amsart}

\usepackage{a4wide}
\usepackage[english]{babel}

\usepackage{amsthm,amssymb}

\usepackage[alphabetic]{amsrefs}

\newtheorem{theorem}{Theorem}
\newtheorem{corollary}[theorem]{Corollary}
\newtheorem{question}[theorem]{Question}

\theoremstyle{definition}
\newtheorem{definition}[theorem]{Definition}
\theoremstyle{remark}
\newtheorem{remark}[theorem]{Remark}

\newcommand{\NN}{\mathbb{N}}
\newcommand{\CC}{\mathbb{C}}
\newcommand{\cont}{\mathcal{C}}

\newcommand{\id}{\mathop{\mathrm{id}}}
\newcommand{\aut}{\mathop{\mathrm{Aut}}}
\newcommand{\saut}{\mathop{\mathrm{SAut}}}

\author{Rafael B. Andrist}
\title{Integrable generators of Lie algebras of vector fields on $\mathbb{C}^n$}
\date{21 August 2018}
\address{Rafael B. Andrist \\ Department of Mathematics \\
American University of Beirut \\
Beirut, Lebanon}

\begin{document}

\begin{abstract}
There exist three vector fields with complete polynomial flows on $\mathbb{C}^n$, $n \geq 2$, which generate the Lie algebra generated by all algebraic vector fields on $\mathbb{C}^n$ with complete polynomial flows. In particular, the flows of these vector fields generate a group that acts infinitely transitive. The analogous result holds in the holomorphic setting.
\end{abstract}

\maketitle

\section{Introduction}

We will need the following two notions of flexibility and infinite transitivity introduced by Arzhantsev et al.\ \cite{flexible}, and the so-called density property introduced by Varolin \cites{Varolin1, Varolin2}. These notions describe in a precise way that the group of automorphisms $\aut(X)$ of a complex variety $X$ is ``large''. The subgroup $\saut(X)$ generated by unipotent one-parameter subgroups, i.e.\ complete polynomial flows of polynomial vector fields, is called the \emph{special automorphism group} of $X$. The Lie algebra of all holomorphic vector fields on $X$ will be denoted by $\mathfrak{X}(X)$ and the Lie algebra of all holomorphic vector fields on $X$ preserving a closed form $\omega$ will be denoted by $\mathfrak{X}_\omega(X)$. The group of $\omega$-preserving holomorphic automorphisms is denoted by $\aut_\omega(X)$.

\begin{definition} \hfill
\begin{enumerate}
\item
Let $X$ be a complex algebraic variety. A point $x \in X_{\mathrm{reg}}$ is called \emph{flexible} if the tangent space $T_x X$ is spanned by the orbits of unipotent one-parameter subgroups of $\saut(X)$. The variety $X$ is called \emph{flexible} if every point $x \in X_{\mathrm{reg}}$ is flexible.
\item
Let $X$ be a reduced Stein space. A point $x \in X_{\mathrm{reg}}$ is called \emph{holomorphically flexible} if the completely integrable holomorphic vector fields on $X$ span the tangent space $T_x X$. The space $X$ is called \emph{holomorphically flexible} if every point  $x \in X_{\mathrm{reg}}$ is flexible.
\end{enumerate}
\end{definition}

\begin{definition} \hfill
Let $X$ be a complex manifold and let $G$ be a group. The action of $G$ on $X$ is said to be \emph{infinitely transitive} if it acts $m$-transitively on $X$ for any $m \in \NN$.
\end{definition}

A vector field will is called \emph{complete} or \emph{completely integrable} if its flow map exists for all complex times.

\begin{definition} \hfill
\begin{enumerate}
\item
Let $X$ be a complex algebraic manifold. If the Lie algebra generated by the complete algebraic vector fields on $X$ coincides with the Lie algebra of all algebraic vector fields on $X$, we say that $X$ has the \emph{algebraic density property}.
\item
Let $X$ be a complex manifold. If the Lie algebra generated by the complete holomorphic vector fields on $X$ is dense (w.r.t.\ local uniform convergence) in the Lie algebra of all holomorphic vector fields on $X$, we say that $X$ has the \emph{density property}.
\end{enumerate}
\end{definition}

\begin{definition} \hfill
\begin{enumerate}
\item
Let $X$ be a complex algebraic manifold with an algebraic \emph{volume form} $\omega$, i.e. a nowhere vanishing section of the canonical bundle. If the Lie algebra generated by the complete $\omega$-preserving algebraic vector fields on $X$ coincides with the Lie algebra of all $\omega$-preserving algebraic vector fields on $X$, we say that $(X, \omega)$ has the \emph{algebraic volume density property}.
\item
Let $X$ be a complex manifold with a holomorphic \emph{volume form} $\omega$, i.e. a nowhere vanishing section of the canonical bundle. If the Lie algebra generated by the complete $\omega$-preserving holomorphic vector fields on $X$ is dense (w.r.t.\ local uniform convergence) in the Lie algebra of all $\omega$-preserving holomorphic vector fields on $X$, we say that $(X,\omega)$ has the \emph{volume density property}.
\end{enumerate}
\end{definition}

The main implication of the density property is the so-called Anders{\'e}n--Lempert Theorem:
\begin{theorem}[\cites{AL2,FR,FR-err,Varolin1}]
Let $X$ be a Stein manifold with the density property resp.\ $(X, \omega)$ a Stein manifold with the volume density property. Let $\Omega \subseteq X$ be an open subset (and resp.\ $H^{n-1}_{\mathrm{d}}(\Omega) = 0$ for the holomorphic de Rham cohomology) and $\varphi \colon [0,1] \times \Omega \to X$ be a $\cont^1$-smooth map such that
\begin{enumerate}
\item $\varphi_0 \colon \Omega \to X$ is the natural embedding,
\item $\varphi_t \colon \Omega \to X$ is holomorphic and injective (and resp. $\omega$-preserving) for every $t \in [0,1]$,
\item $\varphi_t(\Omega)$ is a Runge subset of $X$ for every $t \in [0,1]$, and
\end{enumerate}
Then for every $\varepsilon > 0$ and for every compact $K \subset \Omega$ there exists a continuous family $\Phi \colon [0, 1] \to \aut(X)$ resp.\ $\Phi \colon [0, 1] \to \aut_\omega(X)$ such that
$\Phi_0 = \id_X$ and $\| \varphi_t - \Phi_t \|_K < \varepsilon$ for every $t \in [0,1]$.

\smallskip
Moreover, these automorphisms can be chosen to be compositions of flows of completely integrable generators of any dense Lie subalgebra of $\mathfrak{X}(X)$ resp.\ $\mathfrak{X}_\omega(X)$ 
\end{theorem}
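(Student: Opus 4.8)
The plan is to reduce the statement to a local-in-time flow approximation and then to a Lie-theoretic approximation of that flow by compositions of flows of complete fields. First I would encode the isotopy $(\varphi_t)$ as a time-dependent vector field. Since each $\varphi_t$ is injective and holomorphic (and $\varphi$ is $\cont^1$ in $t$), the velocity $\partial_t \varphi_t$ is holomorphic in the space variable and continuous in $t$, so it transports to a holomorphic vector field on the moving domain $\Omega_t := \varphi_t(\Omega)$ by setting $V_t := (\partial_t \varphi_t) \circ \varphi_t^{-1}$; then $(\varphi_t)$ is precisely the non-autonomous flow of $(V_t)$ starting from the inclusion $\varphi_0$. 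In the volume-preserving case, $\varphi_t^* \omega = \omega$ forces $L_{V_t}\omega = 0$, i.e.\ $\iota_{V_t}\omega$ is a closed $(n-1)$-form on $\Omega_t$.

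Second, I would approximate $V_t$, uniformly on compacts of $\Omega_t$ and locally uniformly in $t$, by vector fields lying in a prescribed dense Lie subalgebra $\mathfrak{g}$ of $\mathfrak{X}(X)$ (resp.\ $\mathfrak{X}_\omega(X)$). This is a two-step matter: the Runge hypothesis on $\Omega_t$ lets one approximate $V_t$ by globally defined holomorphic fields on $X$ (in the volume case one first writes $\iota_{V_t}\omega = d\beta_t$, using $H^{n-1}_{\mathrm{d}}(\Omega)=0$, approximates the potential $\beta_t$ by a global form by Runge, and converts back to an $\omega$-preserving field), and the density (resp.\ volume density) property then lets one approximate those global fields by elements of $\mathfrak{g}$.

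Third, I would convert field approximation into flow approximation. Partitioning $[0,1]$ into $0 = t_0 < \dots < t_N = 1$ with small mesh, I freeze $V_t$ on each subinterval and approximate its short-time flow. Since the approximating field from $\mathfrak{g}$ is a finite Lie combination of complete fields $W_1, \dots, W_k$, I would realize the time-$\delta$ map of that combination, up to error $o(\delta)$, by an explicit composition of the genuine flows $\exp(s W_j)$ via the standard Trotter product and commutator formulas $\exp(\delta(W_i+W_j)) \approx \exp(\delta W_i)\exp(\delta W_j)$ and $\exp(\delta^2 [W_i,W_j]) \approx \exp(\delta W_i)\exp(\delta W_j)\exp(-\delta W_i)\exp(-\delta W_j)$. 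Each such composition is an honest automorphism in $\aut(X)$ (resp.\ $\aut_\omega(X)$), and concatenating over the partition produces the desired family $\Phi \colon [0,1] \to \aut(X)$, continuous in $t$ with $\Phi_0 = \id_X$.

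The main obstacle is the bookkeeping of errors on the moving Runge domain: the automorphisms $\Phi_{t_j}$ must be arranged so that the relevant compact sets stay inside the region where $V_t$ is defined and well approximated, while the per-step errors $o(\delta)$ accumulate over $N \sim 1/\delta$ steps without exceeding $\varepsilon$. Controlling this requires a Gronwall-type stability estimate for flows together with a uniform-in-$t$ choice of approximating fields, and in the volume case the additional care that every approximation step remain exactly $\omega$-preserving so that the error control does not leave the constraint set. This is where the Runge condition, and the cohomology vanishing in the volume case, enters essentially.
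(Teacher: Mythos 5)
Your proposal is correct, and it is essentially the proof given in the sources this paper cites for the statement (the paper itself states the Anders\'en--Lempert Theorem as a known result of \cite{AL2}, \cite{FR}, \cite{FR-err} and \cite{Varolin1} without reproving it): encoding the isotopy as a time-dependent vector field, splitting the approximation into a Runge step (via potential $(n-2)$-forms and the cohomology vanishing in the volume case) followed by a density-property step, and realizing flows of Lie combinations of complete fields by Euler/Trotter-type compositions over a fine time partition is exactly the argument of Forstneri\v c--Rosay and Varolin. The error bookkeeping and Gronwall-type stability you flag as the main obstacle is likewise handled in those references in the standard way, so your outline matches the established proof.
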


\begin{remark}
The following implications are well-known (see \cites{KK-survey})
\[
\begin{split}
\text{algebraic (volume) density property}
\Longrightarrow
\text{(volume) density property} \\
\Longrightarrow 
\text{holomorphic flexibility}
\wedge
\text{holomorphic infinite transitivity}
\end{split}
\]
However, the algebraic density property may not necessarily imply (algebraic) flexibility or (algebraic) infinite transitivity.
The results of \cite{flexible} for irreducible algebraic varieties show that
\[
\text{flexible} \Longleftrightarrow \text{$\saut$ infinitely transitive}
\]
\end{remark}

In \cite{AKZ-transitivity}*{Theorem~2.1} Arzhantsev, Kuyumzhiyan and Zaidenberg have shown that any smooth non-degenerate complex-affine toric variety of dimension at least $2$ is a flexible manifold. More recently they showed \cite{AKZ-generators} that finitely many unipotent subgroups are sufficient in order to generate a subgroup of $\saut$ which acts $m$-transitively for any $m \in \NN$.

In particular, for $X = \CC^n$ they showed that $4$ unipotent subgroups are sufficient. In case of $n = 2$, even $3$ unipotent subgroups are sufficient, see \cite{AKZ-generators}*{Theorem~5.17}.

\bigskip
In this short article we both sharpen and extend this result for $\CC^n, n \geq 2,$ and generalize it further to the holomorphic situation. In fact, $3$ unipotent subgroups are always sufficient. Moreover, the corresponding vector fields can be chosen to generate the whole Lie algebra of volume-preserving algebraic vector fields.
The result also holds in a algebro-holomorphic situation: $3$ complete algebraic vector fields, one of them with necessarily non-algebraic flow, can be chosen such that they generate the Lie algebra of all polynomial vector fields on $\CC^n, n \geq 2$.

\section{Three generators}

\begin{theorem}
The Lie algebra of polynomial vector fields on $\CC^n, n \geq 2,$ is generated by the following three complete polynomial vector fields:
\begin{align}
U &= \frac{\partial}{\partial z_n}\\
V &= \frac{\partial}{\partial z_n} + z_{n}^{3} \frac{\partial}{\partial z_{n-1}} + z_{n} z_{n-1}^{3} \frac{\partial}{\partial z_{n-2}} + \dots + z_n z_{n-1} \cdots z_3  z_2^3 \frac{\partial}{\partial z_1} \\
W &= z_1^2 \cdots z_{n-1}^2 \cdot z_n \frac{\partial}{\partial z_n}
\end{align}
\end{theorem}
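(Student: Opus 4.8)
The plan is to prove that the Lie subalgebra $\mathfrak{g} \subseteq \mathfrak{X}(\CC^n)$ generated by $U,V,W$ contains every monomial field $z^\alpha\,\partial/\partial z_i$; since these span the Lie algebra of all polynomial vector fields as a $\CC$-vector space, this is equivalent to the assertion. First, though, one verifies completeness of the three generators. The field $U=\partial/\partial z_n$ is a translation. The field $V$ is triangular for the ordering $z_n \succ z_{n-1}\succ\cdots\succ z_1$, because the coefficient of $\partial/\partial z_j$ is $\bigl(\prod_{i=j+2}^{n} z_i\bigr)z_{j+1}^3$ and depends only on $z_{j+1},\dots,z_n$; hence the associated derivation is locally nilpotent and $V$ has a complete (even polynomial, unipotent) flow. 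Finally $W=m\,\partial/\partial z_n$ with $m=z_1^2\cdots z_{n-1}^2 z_n$ acts only on $z_n$, along which $\dot z_n = (z_1^2\cdots z_{n-1}^2)\,z_n$ is linear with constant coefficient; thus $W$ integrates to the complete but non-algebraic flow $z_n\mapsto z_n\,e^{t\,z_1^2\cdots z_{n-1}^2}$, which is the one generator forced to have a non-polynomial flow.

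The first substantive step is to produce all constant fields $\partial/\partial z_i$ by a nilpotent cascade. Writing $V_0=V-U\in\mathfrak g$, the coefficient $z_n^3$ of $\partial/\partial z_{n-1}$ is cubic in $z_n$, so three successive brackets with $U=\partial/\partial z_n$ differentiate it to a constant while killing every remaining term (each of which no longer depends on $z_n$ after a single $z_n$-differentiation); thus $\tfrac16\,[U,[U,[U,V]]]=\partial/\partial z_{n-1}$. Assume inductively that $\partial/\partial z_{n-1},\dots,\partial/\partial z_{n-k+1}$ are available. Bracketing $V_0$ successively with $\partial/\partial z_n,\partial/\partial z_{n-1},\dots,\partial/\partial z_{n-k+1}$ peels off the leading product factors and leaves a field whose lowest term is a constant multiple of $z_{n-k+1}^2\,\partial/\partial z_{n-k}$, all other terms being independent of $z_{n-k+1}$; two further brackets with $\partial/\partial z_{n-k+1}$ then yield $\partial/\partial z_{n-k}$. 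Hence $\partial/\partial z_1,\dots,\partial/\partial z_n\in\mathfrak g$.

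Next I would mine $W$ using these translations. Since $[\partial/\partial z_j,\,m\,\partial/\partial z_n]=(\partial m/\partial z_j)\,\partial/\partial z_n$ for every $j$, iterated bracketing differentiates $m$ and produces all fields $z^\beta\,\partial/\partial z_n$ with $\beta\le(2,\dots,2,1)$ componentwise; in particular $z_n\,\partial/\partial z_n$ and $z_{n-1}\,\partial/\partial z_n$. The field $z_n\,\partial/\partial z_n$ acts ad-diagonalizably and grades $\mathfrak{X}(\CC^n)$ by $z_n$-degree, and because $\mathfrak g$ is stable under this ad-operator it decomposes as the direct sum of its $z_n$-homogeneous components; consequently each homogeneous component of any element of $\mathfrak g$ again lies in $\mathfrak g$. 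This weight decomposition, refined along the way by the analogous Euler fields $z_i\,\partial/\partial z_i$ in the remaining coordinates, is the device that lets me extract individual monomial fields from the multi-term brackets above.

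The decisive and most delicate step is climbing to unbounded degree, and here the cubic exponents are essential: a merely quadratic generator would reproduce only the affine and $\mathfrak{sl}_2$-type part and stall, exactly as $\{\partial,\,z^2\partial\}$ generates only $\langle\partial,z\partial,z^2\partial\rangle$ whereas $\{\partial,\,z^3\partial\}$ generates the full one-variable Witt algebra. Isolating $z_n^3\,\partial/\partial z_{n-1}$ from $V_0$ by its $z_n$-weight and bracketing against $z_{n-1}\,\partial/\partial z_n$ yields $z_n^3\,\partial/\partial z_n$ modulo a term of different direction that the gradings and the operators $\mathrm{ad}(\partial/\partial z_i)$ remove; together with $z_n\,\partial/\partial z_n$ this gives $z_n^2\,\partial/\partial z_n$, and then $[z_i^2\,\partial/\partial z_i,\,z_i^k\,\partial/\partial z_i]=(k-2)\,z_i^{k+1}\,\partial/\partial z_i$ climbs to every power in that direction. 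Direction-changing brackets $[z^\gamma\,\partial/\partial z_i,\,z^\delta\,\partial/\partial z_j]$ then spread high degree into every coordinate and every direction, while $\mathrm{ad}(\partial/\partial z_i)$ lowers exponents and the weight projections isolate targets, so one fills in all $z^\alpha\,\partial/\partial z_i$ by induction on total degree. I expect essentially all of the difficulty to live in this final step, whose real content is the bookkeeping that converts the cubic terms of $V$ together with the high degree of $W$ into genuine climbing while retaining, through the Euler gradings, enough control to isolate each individual monomial field.
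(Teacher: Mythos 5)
Your overall skeleton --- first all $\partial/\partial z_i$ via the triangular cascade on $V$, then mining $W$ with translations, then Euler fields and degree climbing --- matches the paper's proof, and your first two steps are correct (the paper obtains the partials the same way, and also records the intermediate fields $z_n\,\partial/\partial z_k$, resp.\ your $z_{j+1}\,\partial/\partial z_j$, which you will need). But your decisive step has a genuine flaw: the mechanism you invoke to isolate $z_n^3\,\partial/\partial z_n$ cannot work. Indeed $[z_n^3\,\partial/\partial z_{n-1},\, z_{n-1}\,\partial/\partial z_n] = z_n^3\,\partial/\partial z_n - 3 z_{n-1}z_n^2\,\partial/\partial z_{n-1}$, and the two terms on the right have \emph{identical} weight vectors under every Euler field $z_i\,\partial/\partial z_i$ (both have weight $2$ for $i=n$ and weight $0$ for all other $i$). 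This is not bad luck: a bracket of two weight-homogeneous fields is itself weight-homogeneous, so your weight projections can never disentangle the output of a single bracket of homogeneous inputs --- they only help for splitting the inhomogeneous generator $V$ itself. Likewise $\mathrm{ad}(\partial/\partial z_{n-1})$ kills the wanted term $z_n^3\,\partial/\partial z_n$ rather than the unwanted one. The step is reparable, but only by constructing the cross term independently and adding it back, e.g.\ $z_{n-1}^2\,\partial/\partial z_{n-1} = 2z_{n-1}z_n\,\partial/\partial z_n - [z_n\,\partial/\partial z_{n-1},\, z_{n-1}^2\,\partial/\partial z_n]$ (all ingredients come from mining $W$), then $2z_{n-1}z_n^2\,\partial/\partial z_{n-1} = [z_n^2\,\partial/\partial z_{n-1},\, z_{n-1}^2\,\partial/\partial z_{n-1}]$; this ``construct and subtract'' is exactly what the paper does in its step (3), e.g.\ $[z_n\,\partial/\partial z_k,\, z_k z_n\,\partial/\partial z_n] + z_k z_n\,\partial/\partial z_k = z_n^2\,\partial/\partial z_n$.

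The second, more serious gap is that your closing step --- passing from the pure powers $z_i^k\,\partial/\partial z_i$ and low-degree mixed fields to \emph{all} monomial fields $z^\alpha\,\partial/\partial z_i$ --- is precisely where the content of the theorem lies, and you leave it as asserted ``bookkeeping.'' The paper closes it with two one-line identities that need no weight projections and produce single monomials with nothing to remove: $[z_\ell^{p}\,\partial/\partial z_\ell,\, z_\ell\,\partial/\partial z_k] = z_\ell^{p}\,\partial/\partial z_k$, and then, for a monomial $f$ not involving $z_\ell$ and of $z_k$-degree $p_k$, $[z_\ell^{p}\,\partial/\partial z_k,\, f(z)\,z_k\,\partial/\partial z_k] = (p_k+1)\, z_\ell^{p} f(z)\,\partial/\partial z_k$; induction over the variables occurring in $f$ then fills in every coefficient exactly. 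Without a lemma of this kind your ``induction on total degree'' has no inductive step. A smaller unproved point of the same nature: you use the Euler fields $z_i\,\partial/\partial z_i$ for $i<n$ before building them; they must be assembled, e.g.\ from $z_i\,\partial/\partial z_n$ (mined from $W$) and $z_n\,\partial/\partial z_i$ (chained from your $z_{j+1}\,\partial/\partial z_j$), via $[z_i\,\partial/\partial z_n, z_n\,\partial/\partial z_i] = z_i\,\partial/\partial z_i - z_n\,\partial/\partial z_n$. On the positive side, your climbing coefficient $(k-2)$ in $[z_i^2\,\partial/\partial z_i, z_i^k\,\partial/\partial z_i]$ is correct (the paper's printed $2+p_k$ is a typo), and your one-variable comparison of $\{\partial, z^2\partial\}$ with $\{\partial, z^3\partial\}$ correctly explains why the cubic terms in $V$ matter.
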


\begin{proof}
The completeness of $U$ is obvious. From the ``triangular'' shape of $V$ one can easily deduce that it is a locally nilpotent derivation and hence complete. The flow of $W$ is given by $\varphi_t(z_1, \dots, z_n) = (z_1, \dots, z_{n-1}, \exp(t \cdot z_1^2 \cdots z_{n-1}^2 ) z_n)$ and complete as well.

The polynomial vector fields will be constructed inductively in several steps. It is sufficient to construct all monomial vector fields for each coordinate direction. We first need to take care of low degrees.

\begin{enumerate}
\item By acting $2$-times resp.\ $3$-times with $[U, \cdot]$ on $V$ we obtain
\[
z_n \frac{\partial}{\partial z_{n-1}}, \quad \frac{\partial}{\partial z_{n-1}}
\]
\item We now continue by induction in $k = n-1, \dots, 2$ by acting $2$-times with $\displaystyle \left[\frac{\partial}{\partial z_{k-1}}, \cdot \right]$ on $V$ and obtain
\[
z_n z_{n-1} \cdots z_k \frac{\partial}{\partial z_{k-1}}
\]
and finally
\[
z_n \frac{\partial}{\partial z_{k-1}}, \quad \frac{\partial}{\partial z_{k-1}}
\]
by acting on the previously obtained field with $\displaystyle \left[ \frac{\partial}{\partial z_{\ell}}, \cdot \right]$ \\ once for each $\ell = k, \dots, {n-1} \text{ resp.\ } n$.

\item Note that we can now get all lower degrees of already obtained monomials by forming a Lie bracket with a partial derivatives. The left hand side contains only terms for which we have established they can be generated.

Next, for each $k = 1, \dots, n-1$, we form
\begin{align*}
\left[ \frac{\partial}{\partial z_n}, \left[ W, z_n  \frac{\partial}{\partial z_k} \right]  + \left[ \frac{\partial}{\partial z_k} , W \right] \right] &= z_1^2 \cdots z_{k-1}^2 \cdot z_k^2 \cdot z_{k+1}^2 \cdots z_{n-1}^2  \frac{\partial}{\partial z_k}  \\
\left[ z_n \frac{\partial}{\partial z_k},  z_1 \cdots z_{k-1} \cdot z_k^2 \cdot z_{k+1} \cdots z_{n-1}  \frac{\partial}{\partial z_k}  \right] &=  2 z_1 \cdots z_{k-1} \cdot z_k \cdot z_{k+1} \cdots z_{n-1} \cdot z_n  \frac{\partial}{\partial z_k}
\end{align*}
Moreover, we also want to obtain $\displaystyle z_n^2 \frac{\partial}{\partial z_n}$:
\begin{align*}
\left[ z_n \frac{\partial}{\partial z_k}, z_k z_n \frac{\partial}{\partial z_n} \right] + z_k z_n \frac{\partial}{\partial z_k} &= z_n^2 \frac{\partial}{\partial z_n}
\end{align*}
\item We are now able to obtain all monomials by a two-step inductive process. Let $k, \ell \in \{1, \dots, n\}$ with $k \neq \ell$. Induction in $p_k \in \NN$ yields:
\begin{align*}
\left[ z_k^2 \frac{\partial}{\partial z_k}, z_k^{p_k} \frac{\partial}{\partial z_k} \right] = (2 + p_k) z_k^{p_k+1} \frac{\partial}{\partial z_k}
\end{align*}
For each $k$ and each $p_1, \dots, p_n$ we proceed by induction in all the indices $\ell \neq k$. Let $f$ be a monomial in all other variables but $z_\ell$, with power $p_k$ in $z_k$.
\begin{align*}
\left[ z_\ell^{p_\ell} \frac{\partial}{\partial z_\ell}, z_\ell \frac{\partial}{\partial z_k} \right] &= z_\ell^{p_\ell} \frac{\partial}{\partial z_k} \\
\left[ z_\ell^{p_\ell} \frac{\partial}{\partial z_k}, f(z) \cdot z_k \frac{\partial}{\partial z_k} \right] &= (p_k + 1) \cdot z_\ell^{p_\ell} f(z) \frac{\partial}{\partial z_k} \qedhere
\end{align*}

\end{enumerate}
\end{proof}

\begin{corollary}
The group generated by the flows of $U$, $V$ and $W$ acts infinitely transitive on $\CC^n$.
\end{corollary}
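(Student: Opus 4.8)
The plan is to deduce the statement from the density property of $\CC^n$ together with the Andersén--Lempert theorem quoted above, the key point being that the Theorem exhibits $U$, $V$, $W$ as completely integrable generators of a \emph{dense} Lie subalgebra. First I would record that the Theorem shows $\{U,V,W\}$ generate the Lie algebra of all polynomial vector fields, which is dense in $\mathfrak{X}(\CC^n)$ for local uniform convergence; since the three fields are complete, they constitute precisely a family of completely integrable generators of a dense Lie subalgebra. Note that the purely algebraic route via the equivalence ``flexible $\Longleftrightarrow$ $\saut$ infinitely transitive'' is not directly available here, because the flow of $W$ is not algebraic, so the group $G := \langle \varphi^t_U, \varphi^t_V, \varphi^t_W \rangle$ is a subgroup of the holomorphic, not the algebraic, automorphism group. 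This is exactly why the holomorphic density-property machinery is the correct tool.

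Next I would apply the ``moreover'' clause of the Andersén--Lempert theorem to the dense subalgebra of polynomial vector fields: any isotopy of injective holomorphic maps with Runge images satisfying the hypotheses can be approximated, uniformly on compacts, by automorphisms that are compositions of flows of $U$, $V$, $W$, i.e.\ by elements of $G$. Combined with the implication ``density property $\Longrightarrow$ holomorphic infinite transitivity'' recorded in the Remark, this shows that the infinite transitivity of $\aut(\CC^n)$ is witnessed by maps lying in $G$. To make this concrete for $m$-transitivity, I would fix distinct points $p_1,\dots,p_m$ and distinct targets $q_1,\dots,q_m$, join the configuration $(p_i)$ to $(q_i)$ by a path of $m$-tuples of distinct points, and thicken it to an isotopy $\varphi_t$ defined on a Runge neighbourhood of $\{p_1,\dots,p_m\}$ (a union of small, hence polynomially convex, disjoint balls) that carries each $p_i$ to $q_i$; approximating $\varphi_1$ then yields $g_0 \in G$ with $g_0(p_i)$ arbitrarily close to $q_i$.

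The hard part will be upgrading this approximation to \emph{exact} values at the finitely many points $p_i$, since the Andersén--Lempert statement as quoted only gives uniform closeness on compacts. I would handle this with the interpolation refinement of the Andersén--Lempert theorem (prescription of finite jets at finitely many points), in which the interpolating automorphism is again a composition of flows of the chosen completely integrable generators and hence lies in $G$; equivalently, one corrects the small discrepancy $g_0(p_i) \mapsto q_i$ by a further element of $G$ close to the identity, using that $G$ is dense and therefore acts locally transitively on small configurations. Composing the two maps produces $g \in G$ with $g(p_i) = q_i$ for every $i$, establishing $m$-transitivity; as $m \in \NN$ was arbitrary, $G$ acts infinitely transitively.
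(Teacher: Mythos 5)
Your proposal is correct and follows essentially the same route as the paper, whose proof is simply the observation that the statement follows from the preceding theorem together with the Andersén--Lempert theorem (in particular its ``moreover'' clause applied to the dense subalgebra of polynomial vector fields generated by $U$, $V$, $W$). The extra work you do on upgrading approximation to exact $m$-transitivity is precisely the content the paper delegates to the cited literature (Varolin's argument, where the implicit function theorem on flow times yields exactness), so it is a welcome elaboration rather than a different method.
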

\begin{proof}
This is a direct consequence of the preceding theorem and the Anders\'en--Lempert Theorem.
\end{proof}

\begin{remark}
One should compare this theorem and its corollary also to the result by Wold and the author \cite{freedense} that already $2$ holomorphic automorphisms are sufficient to generate a dense subgroup of the holomorphic automorphism group of $\CC^n$. However, one of these automorphisms was not obtained as a flow of a vector field. The method of proof is not related and cannot be used to further reduce the number of complete vector fields needed for generating the Lie algebra.
\end{remark}

\begin{theorem}
The Lie algebra generated by complete polynomial vector fields on $\CC^n, n \geq 2,$ with polynomial flow is generated by the following three complete vector fields:
\begin{align}
U &= \frac{\partial}{\partial z_n}\\
V' &= \frac{\partial}{\partial z_n} + z_{n}^{5} \frac{\partial}{\partial z_{n-1}} + z_{n}^2 z_{n-1}^{5} \frac{\partial}{\partial z_{n-2}} + \dots + z_n^2 z_{n-1}^2 \cdots z_3^2 z_2^5 \frac{\partial}{\partial z_1} \\
V'' &= \frac{\partial}{\partial z_1} + z_{1}^{5} \frac{\partial}{\partial z_{2}} + z_{1}^2 z_{2}^{5} \frac{\partial}{\partial z_{3}} + \dots + z_1^2 z_{2}^2 \cdots z_{n-2}^2 z_{n-1}^5 \frac{\partial}{\partial z_n} 
\end{align}
\end{theorem}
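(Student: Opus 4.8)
The plan is to reduce the statement to the single assertion that $U$, $V'$ and $V''$ generate every divergence-free polynomial vector field, and then to run an inductive construction parallel to that of the previous theorem, but carried out entirely inside the Lie algebra of divergence-free fields. First I would note that each of $U$, $V'$, $V''$ is triangular, hence a locally nilpotent derivation, hence complete with polynomial flow. A field with polynomial flow is automatically divergence-free for $\omega = dz_1\wedge\cdots\wedge dz_n$: its flow $\exp(tX)$ is a one-parameter group of polynomial automorphisms, so its Jacobian is a nonzero constant $J_t$ independent of $z$; differentiating at $t=0$ shows $\operatorname{div}X$ is a constant $c$, whence $J_t = e^{ct}$, and polynomiality in $t$ forces $c=0$. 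Since the bracket of divergence-free fields is divergence-free, the Lie algebra generated by $U,V',V''$ lies inside the Lie algebra generated by all complete fields with polynomial flow, which in turn lies inside the Lie algebra $\mathfrak{D}$ of all divergence-free polynomial fields. Hence it suffices to prove the reverse, namely that $U,V',V''$ already generate $\mathfrak{D}$.

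Next I would reduce to the \emph{pure shears} $z^\alpha\partial_{z_i}$ with $\alpha_i=0$, which are themselves locally nilpotent. These generate $\mathfrak{D}$: contracting a divergence-free field with $\omega$ yields a closed, hence exact, $(n-1)$-form, so $\mathfrak{D}$ is spanned by the Hamiltonian fields $H_{ij}(z^\mu) = \partial_{z_j}(z^\mu)\,\partial_{z_i} - \partial_{z_i}(z^\mu)\,\partial_{z_j}$, and a direct computation gives
\[ [\, z^{\gamma}\partial_{z_i},\; z_i^{\mu_i}\partial_{z_j}\,] = -H_{ij}(z^{\mu}), \qquad \gamma = \mu - \mu_i e_i, \]
exhibiting each $H_{ij}(z^\mu)$ as a single bracket of two pure shears. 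Thus everything comes down to generating every pure shear from $U$, $V'$, $V''$.

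For that I would mirror steps (1)--(3) of the previous proof. Iterating $[U,\cdot] = [\partial_{z_n},\cdot]$ on $V'$ peels the chain from the top and extracts $\partial_{z_{n-1}}$, $z_n\partial_{z_{n-1}}$ and $z_n^2\partial_{z_{n-1}}$; feeding the new partials back into $[\,\cdot\,,V']$ and again stripping powers of $z_n$ walks down the chain, producing all partials $\partial_{z_k}$, the consecutive basic shears $z_{k+1}\partial_{z_k}$, and a quadratic shear. Using $V''$ symmetrically supplies the reverse shears $z_k\partial_{z_{k+1}}$; this is exactly the role played by $W$ before, now filled by a second locally nilpotent field so that nothing ever leaves $\mathfrak{D}$. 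The exponents $5$ and $2$ in $V'$, $V''$ are chosen precisely so that this extraction yields a quadratic shear $z_\ell^2\partial_{z_k}$ alongside the linear ones. Two divergence-free operations then replace the forbidden moves of the earlier proof: the direction change $[\, z^\alpha\partial_{z_i}, z_i\partial_{z_j}] = z^\alpha\partial_{z_j}$ and the degree raising $[\, z^\alpha\partial_{z_i}, z_i^2\partial_{z_j}] = 2\,z_i\,z^\alpha\partial_{z_j}$ (both valid when $\alpha_j=0$). Direction changes connect all basic shears; one quadratic shear together with the basic shears yields all quadratic shears; and composing a direction change, a degree raising and a return direction change multiplies a shear's coefficient by any chosen variable while restoring its direction. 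Iterating this builds every pure shear by induction on degree.

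The hard part --- and the genuine point of departure from the previous theorem --- is the degree-raising step. There the induction was driven by $z_k^2\partial_{z_k}$ and $f\cdot z_k\partial_{z_k}$, both of nonzero divergence and hence unavailable here: every intermediate field must stay divergence-free, which pins the exponent of the differentiated variable to zero and forbids raising a power in place. The round-trip bracket above is the substitute, but it routes through an auxiliary index distinct from the two it acts on, so it requires $n\geq 3$. The remaining case $n=2$ I would treat through the Hamiltonian picture, where $\mathfrak{D} = \{H(f)\}$ and $[H(f),H(g)] = H(\{f,g\})$ for the Poisson bracket; there it remains only to check that the Hamiltonians extracted from $V'$ and $V''$, namely $z_1^k$ and $z_2^k$ for $1\le k \le 6$, generate all polynomials under $\{\cdot,\cdot\}$, which follows by an easy induction on degree from $\{z_1^a,z_2^b\} = ab\,z_1^{a-1}z_2^{b-1}$.
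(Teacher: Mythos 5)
Your overall strategy parallels the paper's more closely than you might expect: all three fields are locally nilpotent, fields with jointly polynomial flow are divergence-free, and everything reduces to generating the pure shears $z^\alpha \partial_{z_i}$, $\alpha_i = 0$ (the paper simply quotes Anders\'en for the fact that these suffice, whereas you reprove it via exactness of polynomial $(n-1)$-forms and the bracket $[z^\gamma\partial_{z_i}, z_i^{\mu_i}\partial_{z_j}] = -H_{ij}(z^\mu)$; that part is correct, as is your extraction of the low-degree shears from $V'$ and $V''$). The genuine gap is in your degree-raising step. The bracket $[z^\alpha\partial_{z_i}, z_i^2\partial_{z_j}] = 2z_i z^\alpha\partial_{z_j}$ multiplies the coefficient by $z_i$, the \emph{direction} of the first field, so to multiply a coefficient $z^\alpha$ by $z_\ell$ you must already hold the pure shear $z^\alpha\partial_{z_\ell}$, which forces $\alpha_\ell = 0$. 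In other words, your round trip can only append a variable that does \emph{not} yet occur in the coefficient; the claim that it ``multiplies a shear's coefficient by any chosen variable'' is false. Tracking exponents makes the obstruction visible: a direction change preserves the coefficient, and a degree raise adds a new variable with exponent exactly $1$, so starting from your linear and quadratic shears every field you ever construct has coefficient equal to a square-free monomial times at most one square. Shears such as $z_2^3\partial_{z_1}$ (already for $n=2,3$) or $z_1^2z_2^2\partial_{z_3}$ are never reached, and the induction on degree does not close --- for every $n$, not just in an edge case.

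This is precisely the difficulty the paper's step (4) is designed to overcome, and the cure is to leave the set of shears temporarily. The paper uses the identity (for $k \neq \ell$, $p \in \NN$, and $f$ a polynomial free of $z_k$ and $z_\ell$)
\[
\left[ \partial_{z_\ell}, \left[ z_k^2 \partial_{z_\ell}, \left[ z_\ell^2 \partial_{z_k},\, z_k^p f \,\partial_{z_\ell} \right] \right] \right] = 2(p+2)\, z_k^{p+1} f\, \partial_{z_\ell},
\]
in which the inner brackets produce non-shear (though still divergence-free) fields and the final bracket with $\partial_{z_\ell}$ kills the unwanted component. This raises the exponent of $z_k$ even when $z_k$ already occurs in the coefficient, and it only consumes fields you have already constructed, namely $z_k^2\partial_{z_\ell}$, $z_\ell^2\partial_{z_k}$ and $\partial_{z_\ell}$. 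Replacing your round trip by this identity, and inducting one variable at a time, repairs the proof; note that it works verbatim for $n = 2$ (with $f$ constant), so your separate Poisson-bracket argument --- essentially viable, though the single formula $\{z_1^a, z_2^b\} = ab\, z_1^{a-1} z_2^{b-1}$ only yields exponents at most $5$ and you would still need brackets of the resulting mixed monomials to finish --- becomes unnecessary.
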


\begin{proof}
From the ``triangular'' shape of $V'$ and $V''$ one again easily deduces that each of them is a locally nilpotent derivation and induces an algebraic $\CC_+$-action.

The polynomial vector fields will be constructed inductively in several steps. It is sufficient to construct all monomial shear vector fields for each coordinate direction. We first need to take care of low degrees.

\begin{enumerate}
\item By acting $3$-times resp.\ $5$-times with $[U, \cdot]$ on $V'$ we obtain
\[
z_n^2 \frac{\partial}{\partial z_{n-1}}, \quad \frac{\partial}{\partial z_{n-1}}
\]
\item We now continue by induction in $k = n-1, \dots, 2$ by acting $3$-times with $\displaystyle \left[\frac{\partial}{\partial z_{k-1}}, \cdot \right]$ on $V^\prime$ and obtain
\[
z_n^2 z_{n-1}^2 \cdots z_k^2 \frac{\partial}{\partial z_{k-1}}
\]
Note again that we can now get all lower degrees of already obtained monomials by forming a Lie bracket with a previously obtained partial derivative. We obtain in particular $\displaystyle \frac{\partial}{\partial z_{k-1}}$ in the induction step.
\item By acting similarly on $V''$ we obtain also
\[
z_1^2 z_{2}^2 \cdots z_{k-1}^2 \frac{\partial}{\partial z_{k}}
\]
for $k = 2, \dots, n$.
\item For any indices $k, \ell \in \{ 1, \dots, n \}$ with $k \neq \ell$ and any $p \in \NN$ and any polynomial $f$ in all other variables except $z_k$ and $z_\ell$ the following holds:
\[
\left[ \frac{\partial}{\partial z_\ell}, \left[ z_k^2 \frac{\partial}{\partial z_\ell} \left[ z_\ell^2 \frac{\partial}{\partial z_k}, z_k^p \cdot f(z) \frac{\partial}{\partial z_\ell} \right] \right] \right]
= 2 (p+2) z_k^{p+1} \cdot f(z) \frac{\partial}{\partial z_\ell}
\]
By taking linear combinations, this allows us to construct, by induction in $p$, every polynomial in the variables $z_1, \dots, z_n$ except $z_k$ in front of $\displaystyle \frac{\partial}{\partial z_k}$ for each $k$. This is sufficient to obtain all the desired polynomial vector fields according to the results of Anders\'en \cite{AL1}.
\end{enumerate}

\end{proof}

\begin{corollary}
The group generated by the (algebraic) flows of $U$, $V'$ and $V''$ acts infinitely transitive on $\CC^n$, i.e.
The group generated by the three unipotent one-parameter subgroups arising as flows of $U$, $V'$ and $V''$ acts infinitely transitively on $\CC^n$.
\end{corollary}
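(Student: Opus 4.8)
The plan is to deduce the statement from the preceding theorem together with the Anders\'en--Lempert theorem, in the same way as the first corollary. First I would note that the preceding theorem identifies the Lie algebra $\mathfrak{h}$ generated by $U$, $V'$ and $V''$ with the Lie algebra of all complete polynomial vector fields with polynomial flow; its proof in fact produces every field $f(z)\,\partial/\partial z_k$ with $f$ a polynomial not involving $z_k$, and by the results of Anders\'en \cite{AL1} these already span a Lie subalgebra that is dense in $\mathfrak{X}(\CC^n)$. Thus the three complete fields $U$, $V'$, $V''$ form a generating set of a dense Lie subalgebra.

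Since all three fields are complete with algebraic flows, the ``moreover'' clause of the Anders\'en--Lempert theorem applies to $\mathfrak{h}$ with this generating set: the automorphisms approximating any admissible isotopy may be taken to be finite compositions of the flows of $U$, $V'$ and $V''$. Consequently the group $G$ generated by these three one-parameter subgroups is dense in $\aut(\CC^n)$ in the topology of local uniform convergence, so the density property of $\CC^n$ is realized already inside $G$, and not merely inside $\saut(\CC^n)$.

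It remains to upgrade this density to exact $m$-transitivity. Passing to the diagonal action on $(\CC^n)^m$, the fields induced by $U$, $V'$, $V''$ are complete and analytic, and the Lie algebra they generate evaluated at an $m$-tuple of pairwise distinct points is the image of $\mathfrak{h}$ under evaluation, namely $\bigoplus_i T_{x_i}\CC^n$; this infinitesimal $m$-transitivity is the flexibility of $\CC^n$ in the sense of \cite{flexible} and can also be read off from the explicit generators, using that the bracket $[z_\ell\,\partial/\partial z_k,\, z_k\,\partial/\partial z_\ell] = z_\ell\,\partial/\partial z_\ell - z_k\,\partial/\partial z_k$ acts in the $z_k$-direction with genuine $z_k$-dependence and hence separates points differing only in that coordinate. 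By the orbit theorem of Sussmann and Stefan for analytic families the $G$-orbit through any such configuration is therefore open; since the configuration space of $m$ pairwise distinct points in $\CC^n$ is connected for $n \geq 2$, there is a single orbit, and $G$ acts $m$-transitively for every $m$.

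The step I expect to be the main obstacle is precisely this last upgrade from approximation to exact transitivity: one must check that the infinitesimal $m$-transitivity holds at \emph{every} tuple of distinct points, not merely at a generic one, and that the tangent space to the orbit of the \emph{three-generator} group $G$ equals all of $\mathfrak{h}$ evaluated there, so that the open-orbit argument runs without first having to manoeuvre into general position.
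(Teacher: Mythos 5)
Your proposal reaches the right conclusion, but by a genuinely different route than the paper, and it contains one false assertion that must be repaired. The paper, like you, starts from the preceding theorem and the Anders\'en--Lempert theorem; but to pass from approximate to exact $m$-transitivity it invokes Varolin's proof \cite{Varolin2}*{Theorem~3.1} (see also \cite{KK-survey}*{Remark~2.2}) that the density property implies infinite transitivity, and then only has to observe that the single potentially non-algebraic ingredient there, the implicit function theorem, is applied to the flow times alone, so the automorphisms produced remain compositions of the three algebraic flows. You replace this by a Nagano--Sussmann orbit argument on the space of $m$-tuples of pairwise distinct points: if the Lie algebra evaluates onto the full tangent space at \emph{every} configuration, then every orbit of the group $G$ is open, and connectedness of the configuration space leaves a single orbit. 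This mechanism is sound (the fields are polynomial, hence real-analytic, and the orbit of $G$ is the Sussmann orbit of the real family underlying $\{cU, cV', cV''\}_{c \in \CC}$), it addresses the group generated by the three one-parameter subgroups directly, and it is more self-contained than the paper's citation of Varolin.

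Two corrections are needed. First, your steps 1--2 assert that the Lie algebra $\mathfrak{h}$ generated by $U$, $V'$, $V''$ is dense in $\mathfrak{X}(\CC^n)$ and that $G$ is dense in $\aut(\CC^n)$; both statements are false. Every complete polynomial vector field with polynomial flow has vanishing divergence (the Jacobian determinant of an algebraic flow is a nowhere-vanishing polynomial in $t$ equal to $1$ at $t=0$, hence identically $1$), and \cite{AL1} is precisely Anders\'en's \emph{volume-preserving} theorem: $\mathfrak{h}$ equals the divergence-zero polynomial fields and is dense only in $\mathfrak{X}_\omega(\CC^n)$, and correspondingly $G \subseteq \aut_\omega(\CC^n)$, which is a proper closed subgroup of $\aut(\CC^n)$. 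This error happens to be harmless, because your step 3 never uses the density claim, only the identification of $\mathfrak{h}$ -- but it must be corrected, and the corrected step 2 is then simply redundant. Second, the step you rightly flag as the main obstacle -- surjectivity of the evaluation of $\mathfrak{h}$ at every tuple of distinct points, not just generic ones -- is true, but your bracket remark does not prove it. It follows directly from the identification of $\mathfrak{h}$: for any polynomial $F$ and indices $k \neq \ell$, the field $\frac{\partial F}{\partial z_\ell}\frac{\partial}{\partial z_k} - \frac{\partial F}{\partial z_k}\frac{\partial}{\partial z_\ell}$ is divergence-free, hence lies in $\mathfrak{h}$, and Hermite interpolation lets you prescribe the $1$-jet of $F$ arbitrarily at finitely many distinct points; summing such fields over suitable pairs $(k,\ell)$ produces any prescribed tangent vectors at $x_1, \dots, x_m$. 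With these two repairs, your argument is a complete and valid alternative to the paper's proof.
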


\begin{proof}
This is again a direct consequence of the preceding theorem and the Anders\'en--Lempert Theorem. By examining the proof of Varolin \cite{Varolin2}*{Theorem~3.1} and by \cite{KK-survey}*{Remark~2.2} in the survey of Kaliman and Kutzschebauch we see that the only potential non-algebraic step is the use of the implicit function theorem. However note that in the proof given by Varolin the implicit function theorem is used only for the flow times. Therefore, the compositions of maps remain polynomial when they arise from polynomial flows.
\end{proof}

%\begin{remark}
%The number of three generating vector fields in the algebraic category is optimal for $\CC^2$. Assume to get a contradiction that two linearly independent complete algebraic vector fields $V$ and $W$ would be sufficient. If we want to generate algebraically the Lie algebra of all polynomial vector fields, we need to obtain in particular all partial derivatives.
%\begin{enumerate}
%\item If $V$ is a partial derivative, then the action of $[V, \cdot]$ can only lead to polynomial vector fields of lower degree and it is impossible to generate the Lie algebra of all polynomial vector fields. Similarly for $W$.
%\item If $V$ is not a partial derivative (after a possible linear change of coordinates), but an LND, then (after a possible polynomial change of coordinates) it is of the form $V = \frac{\partial}{\partial z_1} + b(z_1) \frac{\partial}{\partial z_2}$ with $b$ being a non-constant polynomial. 
%\end{enumerate}
%\end{remark}

Given the initially discussed result \cite{AKZ-generators}*{Theorem~2.1} of Arzhantsev, Kuyumzhiyan and Zaidenberg for finitely generated, infinitely transitive actions on toric varieties and the positive results for the (relative) density property for certain toric varieties \cite{KLL} by Kutzschebauch, Leuenberger and Liendo, the following question arises naturally:
\begin{question}
Can the Lie algebra of polynomial vector fields on a toric variety with the density property be generated by finitely many complete polynomial vector fields, and by how many? 
\end{question}

\begin{bibdiv}
\begin{biblist}

\bib{AL1}{article}{
   label={Asn90},
   author={Anders\'en, Erik},
   title={Volume-preserving automorphisms of ${\bf C}^n$},
   journal={Complex Variables Theory Appl.},
   volume={14},
   date={1990},
   number={1-4},
   pages={223--235},
   issn={0278-1077},
   review={\MR{1048723}},
   doi={10.1080/17476939008814422},
}

\bib{AL2}{article}{
   author={Anders\'en, Erik},
   author={Lempert, L\'aszl\'o},
   title={On the group of holomorphic automorphisms of ${\bf C}^n$},
   journal={Invent. Math.},
   volume={110},
   date={1992},
   number={2},
   pages={371--388},
   issn={0020-9910},
   review={\MR{1185588}},
   doi={10.1007/BF01231337},
}

\bib{freedense}{article}{
   author={Andrist, Rafael B.},
   author={Wold, Erlend Forn\ae ss},
   title={Free dense subgroups of holomorphic automorphisms},
   journal={Math. Z.},
   volume={280},
   date={2015},
   number={1-2},
   pages={335--346},
   issn={0025-5874},
   review={\MR{3343909}},
   doi={10.1007/s00209-015-1425-8},
}

%\bib{MR3114921}{article}{
%   author={Arzhantsev, Ivan},
%   author={Flenner, Hubert},
%   author={Kaliman, Shulim},
%   author={Kutzschebauch, Frank},
%   author={Zaidenberg, Mikhail},
%   title={Infinite transitivity on affine varieties},
%   conference={
%      title={Birational geometry, rational curves, and arithmetic},
%   },
%   book={
%      series={Simons Symp.},
%      publisher={Springer, Cham},
%   },
%   date={2013},
%   pages={1--13},
%   review={\MR{3114921}},
%   doi={10.1007/978-1-4614-6482-2_1},
%}

\bib{flexible}{article}{
   author={Arzhantsev, I.},
   author={Flenner, H.},
   author={Kaliman, S.},
   author={Kutzschebauch, F.},
   author={Zaidenberg, M.},
   title={Flexible varieties and automorphism groups},
   journal={Duke Math. J.},
   volume={162},
   date={2013},
   number={4},
   pages={767--823},
   issn={0012-7094},
   review={\MR{3039680}},
   doi={10.1215/00127094-2080132},
}

\bib{AKZ-transitivity}{article}{
   author={Arzhantsev, I. V.},
   author={Za\u\i denberg, M. G.},
   author={Kuyumzhiyan, K. G.},
   title={Flag varieties, toric varieties, and suspensions: three examples
   of infinite transitivity},
   language={Russian, with Russian summary},
   journal={Mat. Sb.},
   volume={203},
   date={2012},
   number={7},
   pages={3--30},
   issn={0368-8666},
   translation={
      journal={Sb. Math.},
      volume={203},
      date={2012},
      number={7-8},
      pages={923--949},
      issn={1064-5616},
   },
   review={\MR{2986429}},
   doi={10.1070/SM2012v203n07ABEH004248},
}

\bib{AKZ-generators}{article}{
   author = {Arzhantsev, Ivan},
   author = {Kuyumzhiyan, Karine},
   author = {Zaidenberg, Mikhail},
    title = {Infinite transitivity, finite generation, and Demazure roots},
   eprint = {arXiv:1803.10620},
     year = {2018},
}

\bib{FR}{article}{
   author={Forstneri\v c, Franc},
   author={Rosay, Jean-Pierre},
   title={Approximation of biholomorphic mappings by automorphisms of ${\bf
   C}^n$},
   journal={Invent. Math.},
   volume={112},
   date={1993},
   number={2},
   pages={323--349},
   issn={0020-9910},
   review={\MR{1213106}},
   doi={10.1007/BF01232438},
}

\bib{FR-err}{article}{
   author={Forstneri\v c, Franc},
   author={Rosay, Jean-Pierre},
   title={Erratum: ``Approximation of biholomorphic mappings by
   automorphisms of $\mathbf C^n$'' [Invent.\ Math.\ {\bf 112} (1993), no. 2,
   323--349; MR1213106 (94f:32032)]},
   journal={Invent. Math.},
   volume={118},
   date={1994},
   number={3},
   pages={573--574},
   issn={0020-9910},
   review={\MR{1296357}},
   doi={10.1007/BF01231544},
}

\bib{KK-survey}{article}{
   author={Kaliman, Shulim},
   author={Kutzschebauch, Frank},
   title={On the present state of the Anders\'en-Lempert theory},
   conference={
      title={Affine algebraic geometry},
   },
   book={
      series={CRM Proc. Lecture Notes},
      volume={54},
      publisher={Amer. Math. Soc., Providence, RI},
   },
   date={2011},
   pages={85--122},
   review={\MR{2768636}},
   eprint={arXiv:1003.3434}
}

\bib{KLL}{article}{
   author={Kutzschebauch, Frank},
   author={Leuenberger, Matthias},
   author={Liendo, Alvaro},
   title={The algebraic density property for affine toric varieties},
   journal={J. Pure Appl. Algebra},
   volume={219},
   date={2015},
   number={8},
   pages={3685--3700},
   issn={0022-4049},
   review={\MR{3320241}},
   doi={10.1016/j.jpaa.2014.12.017},
}

\bib{Varolin1}{article}{
   author={Varolin, Dror},
   title={The density property for complex manifolds and geometric
   structures},
   journal={J. Geom. Anal.},
   volume={11},
   date={2001},
   number={1},
   pages={135--160},
   issn={1050-6926},
   review={\MR{1829353}},
   doi={10.1007/BF02921959},
}

\bib{Varolin2}{article}{
   author={Varolin, Dror},
   title={The density property for complex manifolds and geometric
   structures. II},
   journal={Internat. J. Math.},
   volume={11},
   date={2000},
   number={6},
   pages={837--847},
   issn={0129-167X},
   review={\MR{1785520}},
   doi={10.1142/S0129167X00000404},
}

\end{biblist}
\end{bibdiv}

\end{document}